\title[Kernel-Based Optimal Control: An Infinitesimal Generator Approach]{
Kernel-Based Optimal Control: An Infinitesimal Generator Approach
}
\author{%
 \Name{Petar Bevanda}$^{1}$\Email{petar.bevanda@tum.de}\\
 \Name{Nicolas Hoischen}$^{1}$ \Email{nicolas.hoischen@tum.de}\\
\Name{Tobias Wittmann}$^{1}$ 
\Email{t.wittmann@tum.de}\\
\Name{Jan Br\"{u}digam}$^{1}$ \Email{jan.bruedigam@tum.de}\\
\Name{Sandra Hirche}$^{1}$ 
\Email{hirche@tum.de}\\
\Name{Boris Houska}$^{2}$ \Email{borish@shanghaitech.edu.cn} \\
$^{1}$\addr Chair of Information-oriented Control, Technical University of Munich, Germany\\
$^{2}$\addr School of Information Science and Technology, ShanghaiTech University
}
\begin{document}

\maketitle

\begin{abstract}%
This paper presents a novel operator-theoretic approach for optimal control of nonlinear stochastic systems within reproducing kernel Hilbert spaces. Our learning framework leverages data samples of system dynamics and stage cost functions, with only control penalties and constraints provided. The proposed method directly learns the infinitesimal generator of a controlled stochastic diffusion in an infinite-dimensional hypothesis space. We demonstrate that our approach seamlessly integrates with modern convex operator-theoretic Hamilton-Jacobi-Bellman recursions, enabling a data-driven solution to the optimal control problems. Furthermore, our learning framework includes nonparametric estimators for uncontrolled infinitesimal generators as a special case. Numerical experiments, ranging from synthetic differential equations to simulated robotic systems, showcase the advantages of our approach compared to both modern data-driven and classical nonlinear programming methods for optimal control.
\end{abstract}

\begin{keywords}%
  Controlled Diffusion Processes, Operator Learning, Optimal Control, RKHS, HJB%
\end{keywords}

\section{Introduction}

Traditional optimal control theory and algorithms rely on first principle models to represent system dynamics, with parameters learned from data through system identification techniques~\citep{Ljung1998}, either online or offline \citep{Rawlings2009}. These models enable the formulation of nonlinear optimal control problems, which can be solved to global optimality using existing methods, such as dynamic programming~\citep{beard1997galerkin}, based on Hamilton-Jacobi-Bellman (HJB) theory~\citep{Fleming1989,Crandall1992}, or other types of global optimal control methods~\citep{Houska2014}. However, due to their high complexity, these global optimal control methods are rarely used in practice~\citep{Houska2019}; at least not for nonlinear systems with many states and controls. Instead, nonlinear programming approaches, such as interior point and sequential quadratic programming methods, are commonly used to solve nonlinear optimal control problems locally \citep{howell2019altro,Biegler2007,Bock1984}. Although these local methods are sensitive to suitable initial guesses, they have reached a high level of maturity \citep{Diehl2002,Houska2011,Zavala2009}.

Recently, however, there has been a shift towards learning models directly from data. This shift leads to completely new types of system models as modern learning and system identification methods depart from relying on first principle models. While neural networks models are popular and often effective in terms of capturing the nonlinear behavior of control systems \citep{BeintemaPhD2024}, such representations are highly nonlinear and difficult to exploit for optimal control~\citep{lutter2020hjb, pmlr-v235-yang24f, Meng2024} due to modeling in a physical state-space.

In contrast to data-driven models in the physical state-space, modern operator-theoretic representations leverage linear operators, such as the Koopman operator \citep{mezic2004comparison,Mezic2005}, to represent the dynamics of observables (functions of the state) \citep{bevanda2024nonparametric, klus2020data, Kostic2022LearningSpaces, Vaidya2024, bruder2020data,Bevanda2023}, see \cite{Bevanda2021KoopmanControl,brunton2022modern} for recent reviews. These operator-theoretic models can be used as a basis to represent and learn system behavior from data. In the context of optimal control, the key advantages of operator-theoretic models is their compatibility with traditional stochastic control theory for diffusion processes \citep{Fleming1989, Crandall1992}. This combination allows for the convexification of nonlinear optimal control problems using infinite-dimensional representations \citep{houska2023convex,Vinter1993}. Furthermore, advances in synthesizing fully data-driven approaches for optimal control using kernel methods have shown promise in solving various types of optimal control problems~\citep{bevanda2024data,Gopalakrishnan2022,thorpe2022}.
\paragraph{Contribution.}
This article contributes novel theory and an innovative algorithm for data-driven nonlinear system identification and globally optimal stochastic control with key contributions in:
\begin{itemize}
    \item \textbf{System Identification.} We introduce a novel method for deriving non-parametric estimators of infinitesimal generators of controlled diffusions. Our approach leverages the properties of reproducing kernel Hilbert spaces (Lemma \ref{lem::KernelMatrix}) to learn the adjoint of the infinitesimal generator of strongly parabolic Fokker-Planck-Kolmogorov (FPK) equation.
    \item \textbf{Stochastic Optimal Control.} With the data-driven estimates of the infinitesimal operator dynamics we formulate a tractable continuous-time Kernel Hamilton-Jacobi-Bellman (KHJB) (Proposition~\ref{prop::HJB}) that enables the computation of approximations of globally optimal solutions to stochastic optimal control problems via Algorithm \ref{alg:IG-kHJB}. To demonstrate its performance, we apply this algorithm to both synthetic control systems and robotics benchmarks, showcasing its advantages over modern data-driven and classical nonlinear programming methods for optimal control.
\end{itemize}
Note that our estimators encompass nonparametric estimators for controlled infinitesimal generators of stochastic processes, which are notably absent from existing literature \citep{klus2016numerical,klus2020data,hou2023sparse,kostic2024learningGenerator}. This distinction sets our work apart from finite-dimensional deterministic settings \citep{buzhardt2023controlled,nuske2023finite} and unforced transfer operators \citep{Kostic2022LearningSpaces, kostic2024consistent}. Moreover, the continuous-time nature of our approach reduces the dependence on a specific time-lag of the data. This allows for the derivation of explicit controllers and value functions, and provides a counterpart to the discrete-time learning-based optimal control method presented in \citep{bevanda2024data}. 
\\{\textbf{Structure}\footnote{\textbf{Notation.} 
 Let $\mu {\in} M_+(\mathbb{X})$ be a measure with full support, $\mathrm{supp}(\mu) {=} \mathbb{X}$ and $M(\mathbb{X})$ the corresponding set of bounded signed Borel measures. The symbols $C^k(\mathbb{X})$, $L_\mu^k(\mathbb{X})$, $H_\mu^k(\mathbb{X})$ represent the set of $k$-times continuously differentiable, $L_\mu^k$-integrable, and $k$-times weakly differentiable functions with $L_{\mu}^2$-integrable derivatives. For non-negative integers $n$ and $m$, $[m,n]=\{m,m{+}1,\dots,n\}$ with $n\geq m$ gives an interval set $[n]\defeq[1,n]$. Given a separable Hilbert space $\mathcal{H}$ we let $\HS{\mathcal{H}}$ be a Hilbert space of Hilbert-Schmidt (HS) operators from $\mathcal{H}$ to itself with norm $\hnorm{A}^2\equiv \lilsum_{i\in\Set{N}}\norm{Ae_i}^2_{\mathcal{G}}$ where $\{e_i\}_{i\in\Set{N}}$ is an orthonormal basis of $\RKHS$.}. After the problem statement in Section \ref{sec:PbStatement}, we present an equivalent convex optimal control formulation, using controlled Fokker-Plank-Kolmogorov (FPK) equations. Section \ref{sec:LearningGeneartor} derives a novel operator regression in RKHS to approximate the infinitesimal generator for controlled diffusion, yielding feedback control policies through a simple dynamic programming recursion. Finally, Section \ref{sec:NumericalResults} demonstrates our approach on a robotic swing-up task on the inverted pendulum and cartpole and is validated by benchmark examples for synthetic ODEs and aforementioned simulated nonlinear systems.} 
\section{Problem Statement} \label{sec:PbStatement}
This work aims to learn an optimal feedback policy $\bm{\pi}^\star$ for a control-affine nonlinear system, such that given the current state $\bx$, the input $\bm{u}^\star = \bm{\pi}^\star(\bx)$ solves the infinite horizon optimal control problem
\begin{equation}
    \displaystyle{\minimize_{\bm{x, u}} \int_{0}^{\infty}  (q(\bx) + r(\bm {u}))} \: \text{d}t \ \hspace{5pt} \text{s.t.} \hspace{5pt} \dot{\bx} = \bm{f}(\bm{x}) + \bm{G}(\bm{x}) \bm{u}=:\bm{f}_{\bm{u}}(\bm{x}), 
    \hspace{5pt}
    \bm{u} \in \mathbb{U},
    \label{eq:problem_statement_cost}
\end{equation}
with $\bm{f} \in C^1(\mathbb{X})^{n_x}$, {$\bm{G} \in C^1(\mathbb{X})^{n_x \times n_u}$} and {stage} cost $\stgcost\in C^1(\mathbb{X})$. Here, we define $\mathbb X \defeq \mathbb R^{n_x}$. Moreover, $\mathbb{U} \defeq \{\bm{u} \in \mathbb{R}^{n_u} \mid \bm{u}_{-} \leq \bm{u} \leq \bm{u}_{+} \}$ denotes control bounds and $r \in C^1(\mathbb U)$ is a strongly convex control penalty. Real systems, commonly modeled by ODEs, are often subject to process noise, which can be taken into account by replacing the deterministic
models by a stochastic differential equation (SDE)~\citep{oksendal2013stochastic}. 
To that end, we build on the approach described in \citep{houska2023convex, bevanda2024data} by considering the extension of \eqref{eq:problem_statement_cost} to dynamics subject to a small white noise disturbance, leading to the closed-loop process
\begin{equation}
\mathrm{d}\bm{X}_t = \left( \bm{f}(\bm{X}_t)+\bm{G}(\bm{X}_t) \bm{\pi}( \bm{X}_t ) \right) \mathrm{d}t + \sqrt{2 \epsilon}~\mathrm{d} \bm{W}_t, \label{eq::ctrlSDE} \tag{\txt{c\textsc{sde}}}
\end{equation}
where $\bm{W}_t$ is a $\R^{n_x}$ Wiener process and $\epsilon > 0$ a diffusion parameter, modeling the amplitude of the process noise. Our goal is to obtain a (Lebesgue measurable) feedback $\bm{\pi}: \spX \to \mathbb U$ that minimizes the average \emph{ergodic} cost,
\begin{eqnarray}
\label{eq::SOCP}
\lim_{T \to \infty} \ \min_{\bm{\pi}:\mathbb{X} \mapsto \mathbb U} \  \expect \left[ \frac{1}{T} \int_0^T  \left( \stgcost(\bm{X}_t) + r( \bm{\pi}(\bm{X}_t) ) \right) \, \mathrm{d}t \right] \hspace{10pt} \text{s.t.} \hspace{5pt} \eqref{eq::ctrlSDE}.
\end{eqnarray}
The above stochastic optimal control problem formulation can {either} be viewed both as a
\textit{viscosity solution to}~\eqref{eq:problem_statement_cost} {for $\epsilon \to 0^+$, whenever this limit exists, or as} an \textit{effort to identify more robust control policies for noisy or uncertain dynamics} {for $\epsilon > 0$}. To ensure the above infinite-horizon problem is well-defined, we require the following assumption from \citep{bevanda2024data}.
\begin{assumption}\label{ass:GC}
There exists a $\bm{\pi}: \mathbb{X} \to \mathrm{int}(\mathbb U)$ with $\bm{\pi} \in L^\infty(\mathbb{X})$ and a strongly convex $\mathcal V \in C^2(\mathbb{X})$ with bounded Hessian, and constants $0 < c_1,c_2 < \infty$ such that
$
(\bm{f}(\bm{x})+\bm{G}(\bm{x}) \bm{\pi}(\bm{x}))^* \nabla \mathcal V(\bm{x}) \leq c_1 - c_2 ( \stgcost(\bm{x}) + r(\bm{\pi}(\bm{x})) ),
$
for all $\bx \in \mathbb{X}$. The set $\mathbb U \subseteq \mathbb R^{n_u}$ is closed, convex, and $\mathrm{int}(\mathbb U) \neq \varnothing$. The control penalty $r \in C^1(\mathbb U)$ is strongly convex.
\end{assumption}
The above condition is often met in practical scenarios; see~\cite {houska2023convex,bevanda2024data} for an in-depth discussion. We require the following dataset to learn~\eqref{eq::ctrlSDE}.
\begin{assumption}\label{asm:data}
There are state observations $\mathsf{X}\defeq\{ \bxi \}_{i\in[N]}$ of the nominal system $\dot{\bm{x}} = \bm{f}(\bm{x})+\bm{G}(\bm{x}) \bm{u}$ under no excitation $\bm{u}_0\defeq\bm{0}$ and under “one-hot” (standard basis) input vectors $\{\bm{u}_j\defeq\bm{e}_j\}_{j \in [n_u]}$, forming a dataset 
 \begin{align}\label{eq:data} 
    \Set{D}^N=\{\Set{D}^N_j\}^{n_u}_{j=0} \qquad \text{where} \qquad \Set{D}^N_j\defeq \left\{\bxi, \dot{\bm{x}}^{(i)}_{\bm{u}_j}\defeq \bm{f}_{\bm{u}_j}({\bm{x}}^{(i)})\right\}_{i=1}^{N}.
\end{align}
\end{assumption}
While the above assumption may be restrictive for real-world data collection, it is still readily fulfilled in many settings, for example, when using existing physical or data-driven models \citep{umlauft2017learning} as well as data from high-fidelity simulators \citep{howelllecleach2022dojo}. 
Depending on whether the stage cost $q$ is known or not, the vector $\bm{q}_\SFX=[q(\bxi)]_{i\in[N]}$ can subsequently be measured or computed on $\SFX$.
 After a reformulation of a convex optimal control problem equivalent to~\eqref{eq::SOCP}, we propose learning infinitesimal generators of the associated controlled diffusion process, leading to a simple kernel-based dynamic programming recursion using the dataset \eqref{eq:data}.
\section{Operator-Theoretic Dynamic Programming and HJB Recursions}
As long as Assumption~\ref{ass:GC} holds, a martingale solution $\Xt$ to~\eqref{eq::ctrlSDE} exists for at least one feasible feedback $\bm{\pi}$; see~\cite{houska2023convex} and \citep[Section II]{bevanda2024data}. Moreover, under the additional assumption that the stage cost of~\eqref{eq::SOCP} has a bounded variance for the optimal ergodic limit distribution, existence of an optimal ergodic solution can be guaranteed for any $\epsilon > 0$; see~\citep[Thm.~1]{houska2023convex}. The solution $\Xt$ constitutes a time-homogeneous Markov diffusion process whose transition operator, $\Gamma_{\bm{\pi}}(t)$, maps the probability density function $\rho_0 \in D_+(\spX)$ of $\randBx_0$ to the probability density function $\rho_t \in H^1(\spX)$ of $\Xt$ so that $\rho_t = \Gamma_{\bm{\pi}}(t) \rho_0$, where $D_+(\mathbb{X})$ is the set of non-negative bounded distributions on $\mathbb{X}$.
It is well-known~\citep{Hinze2009,Oksendal2000} that---under mild regularity assumptions---$\Gamma_{\bm{\pi}}(t)$ is for any given $\bm{\pi}\in L^\infty((0,T)\times \spX)^{n_u}$ a bounded linear operator on {$H^1(\spX)$}.

It is worth recalling that the infinitesimal generator {$\fFPK: H^1(\spX) \to H^1(\spX)$} associated to the Perron-Frobenius operator,
is defined for every {$\rho \in H^1(\spX) $} such that the limit $\fFPK \rho\ \defeq \textstyle \lim_{t \to 0^+} \frac{\Gamma_{\bm{\pi}}(t) \rho - \rho}{t}$ exists under mild regularity assumptions \citep{engel2000one}. {The map} $\fFPK$ is called \textit{Fokker-Planck-Kolmogorov (FPK)} operator~\citep{Bogachev2015} or Kolmogorov's infinitesimal generator  \citep{froyland2023detecting}. It describes the evolution of the probability density of the state of a diffusion process $\Xt$ under arbitrary feedback laws ${\bm{\pi}}\in L^\infty(\mathbb{X})$. 
Notice that the backward FPK operator $\bFPK$, relates to the infinitesimal generator of the stochastic Koopman operator for autonomous stochastic differential equations (SDEs) \citep{kostic2024learningGenerator} that describes the evolution of observables {$h \in H^1(\mathbb X)$  under $\Xt$}.
Observing that $\fFPK$ is affine in $\bm{\pi}$, will allow us to reformulate~\eqref{eq::SOCP} as an equivalent convex optimization problem \citep{houska2023convex}.
\subsection{Convex Reformulation}
\label{sec::ConvexReformulation}
If $\epsilon > 0$, we have $\rho_t = \Gamma_{\bm{\pi}}(t) \rho_0 > 0$ for $t > 0$, since $\rho_t$ is the state of a uniformly parabolic diffusion process \citep{Bogachev2015}. Exploiting that $\fFPK$ is affine in $\bm{\pi}$ allows to write the infinitesimal generator in the form  
\begin{align}
\label{eq:fFPK_rho}
  \textstyle  \dot{\rho} \ \defeq \fFPK \rho = \mathcal{A} \rho + \mathcal{B} (\bm{\pi} \rho)
\end{align}
with linear-system shorthands $\mathcal{A}\rho = - \nabla \cdot \left(\bm{f} \rho  \right) + \epsilon \nabla^2 \rho \quad \text{and} \quad \mathcal{B}(\bm{\pi}\rho) = - \nabla \cdot \left(  \bm{G} \bm{\pi} \rho  \right)$, with the autonomous $\mathcal{A}: H^1(\Set{X}) \to H^1(\Set{X})$, and the control  $\mathcal{B}: [H^1(\Set{X})]^{n_u} \to H^1(\Set{X})$ FPK operator.
Then, the {ergodic} optimal control problem \eqref{eq::SOCP} can equivalently be cast as a convex PDE-constrained optimization problem through a change of variables, namely, $\bm{\nu} = \bm{\pi}{\rho}$ \citep{houska2023convex}. 
In particular, for a finite time horizon $T < \infty$, and a given initial probability distribution of the state, $\rho(0) = \rho_0  ~\text{on}~\spX$, the finite-horizon stochastic optimal control problem is equivalent to solving the PDE-constrained convex optimization problem
\begin{align}
\label{eq::PDEOCP}
\!\!\!\!\!\!\!\!\!\mathcal J(T,\rho_0) {\defeq} \min_{\rho,\bm{\nu}} \int_0^T \int_{\mathbb{X}} \left(\stgcost{+}r\left(\frac{\bm{\nu}}{\rho}\right) \right) \rho \, \mathrm{d}\bx \, \mathrm{d}t \quad \mathrm{s.t.}~
\left\{
\begin{array}{l}
\dot \rho {=}  \mathcal{A} \rho {+} \mathcal{B}\bm{\nu} ~\text{on}~(0,T) {\times} \mathbb{X}\\
\rho(0) {=} \rho_0  ~\text{on}~ \mathbb{X}\\
\bm{\nu} \in \rho \Set{U}
\end{array}
\right.
\tag{\txt{\textsc{pde-ocp}}}
\end{align}
Under the mentioned regularity assumptions, the ergodic limit in~\eqref{eq::SOCP} is given by $\lim_{T \to \infty} \ \frac{1}{T} \ \mathcal J(T,\rho)$ and invariant for $\epsilon  > 0$ in the sense that it does not depend on the initial distribution $\rho_0$, see~\cite[Thm.~1]{houska2023convex}.
\paragraph{Hamilton-Jacobi-Bellman equations}
To exploit the convex duality of HJBs and FPKs, we first introduce the Fenchel conjugate $r^*( \bm{\lambda} ) \defeq  \sup_{\bm{u} \in \mathbb U}\left\{  \langle\bm{\lambda}, \bm{u}\rangle  - r(\bm{u}) \right\}$ of the control penalty ${r}$ and define the following optimal policy form $\bm{u}^\star( \cdot)$ and dual function $\mathcal{D}_r(\cdot)\equiv -r^*(-(\cdot)) $, respectively
\begin{align}
\bm{u}^\star( \bm{\lambda}) \defeq {\argmin_{\bm{u} \in \mathbb U}}\left\{ r(\bm{u}) + \langle\bm{\lambda}, \bm{u}\rangle \right\} \quad  ~\text{and}~ \quad 
\mathcal{D}_r( \bm{\lambda} ) \defeq \min_{\bm{u} \in \mathbb U}\left\{ r(\bm{u}) + \langle\bm{\lambda}, \bm{u}\rangle \right\}.
\end{align}
so $\bm{u}^\star( \bm{\lambda}) {=}{-} \nabla r^*(\bm{\lambda })$ and $
\mathcal{D}_r( \bm{\lambda} ) {=} r(\bm{u}^\star( \bm{\lambda} )){+} \langle\bm{\lambda}, \bm{u}^\star(\bm{\lambda})\rangle$.
We construct an associated dual problem with the {strongly measurable} functional ${V}{:} [0,T] {\to} H_\mu^1(\mathbb{X})$ as a co-state, such that the infinite-dimensional optimization problem \eqref{eq::PDEOCP} can be solved using the stochastic HJB equation
\begin{align}
\label{eq:HJB}
-\dot V =  \mathcal{A}^* V +   q +  \mathcal{D}_r( \mathcal{B}^* V ) \quad \mathrm{s.t.}\quad V(T) = 0  \qquad ~\text{on}~ \Set{X}_T
\tag{\txt{\textsc{hjb-fvp}}}
\end{align}
which can be interpreted as a \textit{final value problem} (FVP) in $H_\mu^1$.

The associated initial value $V(0)$ turns out to be a Riesz representation of the cost functional of~\eqref{eq::PDEOCP}; that is, we have $\mathcal J(T,\rho_0) = \langle \rho_0,V\rangle$ for all initial probability distributions $\rho_0 \in H_{\mu^{-1}}^1(\mathbb X)$.
In this context, one needs to introduce a suitable ergodic probability measure $\mu \in H^1(\mathbb X)$ in order to define the weighted Sobolev space $H_\mu^1$;
see~\citep{houska2023convex} for details. Conditions for which this argument holds (i.e., under strong duality) are nonrestrictive and can be found in \citep[Theorem 2]{bevanda2024data}.
As the above stochastic HJB provides a Hilbert space FVP, a space discretization is usually required for practical computation, e.g., one coming from Galerkin methods as proposed in~\citep{houska2023convex}. Such discretizations are, however, computationally demanding in moderate to high dimensional state spaces. In the following section, we devise a flexible (nonparametric) framework and provide a data-driven approximation of \eqref{eq:HJB} based on \eqref{eq:data}.
\section{Generator Regression for Control-Affine Diffusions and HJB Approximation} \label{sec:LearningGeneartor}
\paragraph{Reproducing Kernel Hilbert Spaces}
We require the RKHS $\RKHS$ to be a dense subspace of the classical Sobolev space $H^1(\spX)$. 
Additionally, we recall that an RKHS is associated with a kernel function $k: \spX \times \spX \to \mathbb R$ that is symmetric positive definite. Let $\phi(\bx)=k(\cdot,\bm{x}): \spX \to \RKHS$ be the canonical feature maps, denoted by $\phi(\bx)$, which can be assumed to satisfy $\phi(\bx) \in H^1(\spX)$ for all $\bx \in \spX$. 
 Moreover, $\forall \bx, \bx' \in \spX$, we have that $k(\bx, \bx') = \scalarpH{\phi(\bx),\phi({\bx'})}=\scalarpH{k(\cdot,\bx),k(\cdot,{\bx'})}$ and the reproducing property $h(\bm{x}) = \langle h,k(\cdot,\bx)\rangle_{\RKHS}$ holds for all $\bx \in \spX$ and all observables $h \in \RKHS$. We introduce the (canonical) \textit{embedding operator}, given for any $f \in L^2$ as $\embedIN:f\mapsto \textstyle \expect_{\randBx\sim p}[f(\randBx) \phi(\randBx)]$, where $p \in \mathcal{M}_+(\mathbb{X})$ a probability measure. Its adjoint, the inclusion operator $\injectIN: \RKHS \to  L^2$, is given by $(\injectIN f)(\bx) = f(\bx)$, for all $\bx \in \spX$ which we consider to be a Hilbert-Schmidt operator, which holds under very mild technical conditions \citep{IngoSteinwart2008SupportMachines}. 
 As we are dealing with differential operators, we require the following assumption for well-posed regression.
 \begin{assumption}\label{asm:RKHS}
 The RKHS $\RKHS$ is norm-equivalent to \(H^{s}(\Set{X})\) with $s > \frac{n_x}{2}+1$
 \end{assumption}
 This condition guarantees, via \textit{Maurin's Theorem} \citep{adams2003sobolev}, that the canonical inclusion $\RKHS \hookrightarrow H^1(\Set{X})$ is a Hilbert-Schmidt (HS) operator, allowing us to formulate \textit{Hilbert-Schmidt regression} problems \citep{Mollenhauer2022} to learn differential operators.\footnote{The requirement stems from the condition $s > n_x/2 +1$ for an embedding $H^s \hookrightarrow H^1$ to be Hilbert-Schmidt. The above assumption is fulfilled by popular kernels with sufficiently regular RKHS, e.g., Matern and Gaussian kernels.}

\paragraph{Hilbert-Schmidt regression in infinite dimensions} 
 To approximate the operators $\mathcal{A}^*,\mathcal{B}^*$ in a nonparametric manner, we look for RKHS approximations $\{L^*_{\bm{\pi}}: \RKHS \to \RKHS\}_{{\bm{\pi}~\in~\bm{0} \cup \{\bm{e}_j\}_{j \in [n_u]}}}$ based on its RKHS restrictions $\{\bFPK\!\mid_\RKHS: \RKHS \to L^2\}_{{\bm{\pi}~\in~\bm{0} \cup \{\bm{e}_j\}_{j \in [n_u]}}}$.
 This is a valid strategy, as the control affinity of \eqref{eq::ctrlSDE} is inherited by its infinitesimal generator so that
 \begin{align}
\mathcal{A}^* \equiv \mathcal{L}^*_{\bm{0}}\qquad \text{and}\qquad \mathcal{B}^* \equiv [(\mathcal{L}^*_{\bm{e}_1}-\mathcal{L}^*_{\bm{0}}) \cdots (\mathcal{L}^*_{\bm{e}_{n_u}}-\mathcal{L}^*_{\bm{0}})],     
 \end{align}
 we define the following set of risk functionals
\begin{align}\label{eq:risk}
   \big\{\mathcal{R}(L^*_{\bm{\pi}})=\|\bFPK\!\mid_\RKHS-\injectIN L^*_{\bm{\pi}}\|^2_{\HS{\RKHS,L^2}}\big\}_{\bm{\pi}~\in~\bm{0} \cup \{\bm{e}_j\}_{j \in [n_u]}},
\end{align}
to measure the mean square error for regression given the labels $\{\bFPK\!\mid_\RKHS\}_{{\bm{\pi}~\in~\bm{0} \cup \{\bm{e}_j\}_{j \in [n_u]}}}$. The latter is classical in the context of infinite-dimensional regression \citep{Kostic2022LearningSpaces,Kostic2023KoopmanEigenvalues,Li2022opLern,Mollenhauer2022,mollenhauer2020nonparametric}. Based on mild regularity conditions \citep{houska2023convex} the  FPK operator $\fFPK$ admits a bounded adjoint $\bFPK$ on $H^1\subseteq L^2$. Then, by selecting a suitably regular RKHS to satisfy Assumption \ref{asm:RKHS}, and ensuring that the restriction $\bFPK\!\mid_\RKHS \, \in \HS{\RKHS, L^2}$, the optimization objective in \eqref{eq:risk} becomes well-defined.

Still, in practice, minimizing \eqref{eq:risk} may require solving a badly conditioned equation system. Thus, we formulate a Tikhonov-regularized problem
 \begin{align}\label{eq:regfFPKonH}
\Big\{L_{\bm{\pi}}^* \defeq  \argmin_{\HfFPK  \in \mathrm{HS}(\RKHS)} \mathcal{R}(\HfFPK) + \gamma\| \HfFPK \|^2_{\mathrm{HS}}=  C_\gamma^{-1} {T_{\bm{\pi}}}\Big\}_{\bm{\pi}~\in~\bm{0} \cup \{\bm{e}_j\}_{j \in [n_u]}}
  , \quad \mathsf{\gamma}> 0 
  \tag{\txt{\textsc{krr-fpk}}}
 \end{align}
which corresponds to the \textit{Kernel Ridge Regression} (KRR) approximation of $\fFPK$ over $\RKHS$ where operators $T_{\bm{\pi}}, C$ are defined as
\begin{align}\label{eq:Cov}
    C \defeq \embedIN\injectIN = \expect \left[\phi({\randBx}) \otimes \phi({\randBx}) \right] \qquad 
     \big\{ T_{\bm{\pi}}  \defeq  \embedIN \embedIN^*_{\bm{\pi}}= \expect \left[\phi({\randBx})\otimes \target({\randBx}) \right] \big\}_{\bm{\pi}\in\bm{0} \cup \{\bm{e}_j\}_{j \in [n_u]}},
\end{align}
where $\embedIN^*_{\bm{\pi}}= \bFPK\!\mid_\RKHS$.
with the \textit{regularized covariance} $C_\gamma=C+\gamma I_{\RKHS}$. In practice, the risk \eqref{eq:risk} can only be evaluated on data \eqref{eq:data} leading to the \textit{empirical risk minimizations}
\begin{align}\label{eq:regfFPKonHestim}
    \Big\{{\widehat{L}^*_{\bm{\pi}}} \defeq  \argmin_{\HfFPK  \in \mathrm{HS}(\RKHS)} \widehat{\mathcal{R}}(\HfFPK)+\gamma\| \HfFPK\|^2_{\mathrm{HS}} = \widehat{C}_\gamma^{-1}{\widehat{T}_{\bm{\pi}}}= \SembedIN\bm{K}^{-1}_\gamma  \SinjectOUT_{\bm{\pi}}\Big\}_{\bm{\pi}~\in~\bm{0} \cup \{\bm{e}_j\}_{j \in [n_u]}} \tag{$\widehat{\txt{\textsc{krr-fpk}}}$}
\end{align}   
to obtain a finite rank operators $\widehat{L}^*_{\bm{0}}\cup\{\widehat{L}^*_{\bm{e}_j}\}_{j \in [n_u]}$. This is due to the Gram matrix $ \bm{K}\defeq \SinjectIN\SembedIN=[k(\bxi,\bxj)]_{i,j \in [N]}$ in $\bm{K}^{-1}_\gamma \defeq (\bm{K} + N\gamma I_N)^{-1}$ uncovered by the \textit{Sherman-Morrison-Woodbury} formula and the reproducing property. The \textit{data-based injection} operators are defined $\SinjectOUT,\SinjectOUT_{\bm{\pi}} \in \mathrm{HS}(\RKHS, \R^N)$ as
\begin{align}\label{eq:Sampl}
    \SinjectIN:  \RKHS \to \mathbb{R}^N ~ \mathrm{s.t.}~ \phi \mapsto [\phi^*(\bm{x}^{(i)})]_{i \in [N]}  \qquad \SinjectOUT_{\bm{\pi}}: \RKHS \to \mathbb{R}^N ~ \mathrm{s.t.}~ \phi \mapsto [\target^*(\bm{x}^{(i)})]_{i \in [N]}.
\end{align}
and their adjoints, \textit{data-based embeddings},
\begin{align}\label{eq:Eembed}
    \SembedIN: \mathbb{R}^N  \to  \RKHS ~ \mathrm{s.t.}~ \bm{w} \mapsto  \textstyle \sum_{i\in [N]}{w}_i\phi(\bm{x}_i)  \qquad \SembedOUT_{\bm{\pi}}: \mathbb{R}^N  \to  \RKHS   ~ \mathrm{s.t.}~ \bm{w} \mapsto  \textstyle \sum_{i\in [N]}{w}_i\target(\bm{x}_i),
\end{align}
which can informally be considered as kernel-induced feature matrices. While \eqref{eq:regfFPKonHestim} is defined over the RKHS $\RKHS$, in practice, our computations will only require the Gram matrix $\bm{K}$ and \textit{target kernel matrices} $\bm{K}_{\bm{0}_{~}}{\defeq} \SinjectIN_{\bm{0}_{~}}\SembedIN,\bm{K}_{\bm{e}_j}{\defeq} \SinjectIN_{\bm{e}_j}\SembedIN \in \R^{N\times N}$ for any $j {\in} [n_u]$ by plugging in the data \eqref{eq:data} from Assumption \ref{asm:data}, derived in the following lemma.
\begin{lemma}
\label{lem::KernelMatrix}
Let $k$ be a Mercer kernel such that $k \in C^4(\Set{X} \times \Set{X})$ with corresponding RKHS $\RKHS$ and the system dynamics be described by \eqref{eq::ctrlSDE} under inputs $\bm{\pi}~\in~\bm{0} \cup \{\bm{e}_j\}_{j \in [n_u]}$. Then, the entries of the target kernel matrices are computed via
\begin{align}\label{eq:kernelTarget}
   \left(\Kyx_{\bm{\pi}}\right)_{ij}  {=}  \langle\textstyle (\bm{f}(\bxi) {+} \bm{G}(\bxi) \bm{\pi}(\bxi)),\textstyle \nabla_{\bxi} k(\bxi, \bxj) \rangle  {+} \epsilon \mathrm{Tr}(\nabla^2_{\bxi}k(\bxi, \bxj)).
\end{align}
\end{lemma}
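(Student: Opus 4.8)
The plan is to collapse each matrix entry into a single reproducing-kernel inner product and then evaluate it with a derivative reproducing property. From the operator definitions in \eqref{eq:Sampl}--\eqref{eq:Eembed}, the $j$-th column of $\SembedIN$ is the plain feature $\phi(\bxj)$, while the $i$-th output coordinate of the target sampling operator applies the target functional at $\bxi$; hence
\[ \left(\Kyx_{\bm{\pi}}\right)_{ij} = \langle \target(\bxi), \phi(\bxj)\rangle_{\RKHS}. \]
Everything then reduces to (a) writing the target feature $\target(\bxi)$ explicitly and (b) pairing it with $\phi(\bxj)$.

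For step (a) I would read off the generator. The adjoint relation $\embedIN^*_{\bm{\pi}} = \bFPK\!\mid_\RKHS$ identifies $\target(\bm{x})$ as the Riesz element of the functional $h \mapsto (\bFPK h)(\bm{x})$, and integration by parts on \eqref{eq:fFPK_rho} gives the backward generator acting on observables as $\bFPK h = \langle \bm{f} + \bm{G}\bm{\pi}, \nabla h\rangle + \epsilon\sum_\ell \partial_{x_\ell}^2 h$. Writing this through the derivative features of $\phi(\bm{x}) = k(\cdot,\bm{x})$ yields
\[ \target(\bxi) = \sum_{\ell} \big(\bm{f}(\bxi) + \bm{G}(\bxi)\bm{\pi}(\bxi)\big)_\ell\, \partial_{x_\ell}\phi(\bxi) + \epsilon \sum_\ell \partial_{x_\ell}^2\phi(\bxi), \]
where $\partial_{x_\ell}$ denotes differentiation of the canonical feature in the $\ell$-th coordinate of its argument, evaluated at $\bxi$.

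For step (b) I would invoke the derivative reproducing property: for every multi-index $\alpha$ of order at most two, the derivative feature lies in $\RKHS$ and $\langle h, \partial^\alpha\phi(\bm{x})\rangle_{\RKHS} = \partial^\alpha h(\bm{x})$ for all $h \in \RKHS$. Applying it with $h = \phi(\bxj)$ turns each pairing into a kernel derivative in the first argument,
\[ \langle \phi(\bxj), \partial_{x_\ell}\phi(\bxi)\rangle_{\RKHS} = \big(\nabla_{\bxi} k(\bxi,\bxj)\big)_\ell, \qquad \langle \phi(\bxj), \partial_{x_\ell}^2\phi(\bxi)\rangle_{\RKHS} = \big(\nabla^2_{\bxi} k(\bxi,\bxj)\big)_{\ell\ell}, \]
so that summing over $\ell$ reproduces exactly $\langle \bm{f}(\bxi)+\bm{G}(\bxi)\bm{\pi}(\bxi), \nabla_{\bxi} k(\bxi,\bxj)\rangle + \epsilon\,\mathrm{Tr}(\nabla^2_{\bxi} k(\bxi,\bxj))$, which is the claim.

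The main obstacle is the legitimacy of step (b): showing that the first- and second-order derivative features genuinely belong to $\RKHS$ and that the reproducing identity survives differentiation (equivalently, that $\partial_{x_\ell}$ and the inner product commute). This is precisely what the hypothesis $k \in C^4(\Set{X}\times\Set{X})$ buys --- a $C^{2m}$ Mercer kernel induces an RKHS of $C^m$ functions on which the reproducing property may be differentiated up to order $m$, so the Laplacian term forces $m = 2$ and hence $C^4$ smoothness. I would establish this by citing the standard derivative-reproducing results and justifying the derivative/inner-product interchange through the uniform continuity guaranteed by $C^4$ regularity; the remaining coordinate bookkeeping is routine.
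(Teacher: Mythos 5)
Your proof is correct and follows essentially the same route as the paper's: reduce each entry of $\Kyx_{\bm{\pi}}$ to an RKHS inner product between the target feature at $\bxi$ and $\phi(\bxj)$, and evaluate that pairing with the derivative reproducing property, for which the $C^4$ hypothesis supplies exactly the order-two differentiability needed (this is the content of the cited result of Zhou). The one point where you diverge is how the backward generator is obtained: the paper invokes the It\^{o} formula for \eqref{eq::ctrlSDE} to read off $\bFPK h = \langle \bm{f}+\bm{G}\bm{\pi},\nabla h\rangle + \epsilon\,\mathrm{Tr}(\nabla^2 h)$ as the drift of $h(\bm{X}_t)$, whereas you derive the same expression as the formal adjoint of the forward operator in \eqref{eq:fFPK_rho} via integration by parts. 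The two are equivalent here; the It\^{o} route is slightly more self-contained because it avoids justifying the vanishing of boundary terms on the unbounded domain $\mathbb{X}=\mathbb{R}^{n_x}$, while your route makes the adjoint identification $\embedIN^*_{\bm{\pi}}=\bFPK\!\mid_\RKHS$ from \eqref{eq:Cov} explicit. Your indexing $\langle \target(\bxi),\phi(\bxj)\rangle$ is the one consistent with the right-hand side of \eqref{eq:kernelTarget}.
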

\begin{proof}
First we use It$\overline{\text{o}}$ formula \citep{arnold1974stochastic,kostic2024learningGenerator}
associated to \eqref{eq::ctrlSDE} for $\bm{u}\equiv \bm{\pi}(\bx)$ to compute $\SembedIN_{\bm{\pi}}$. After using \eqref{eq:Sampl}-\eqref{eq:Eembed} we have 
$\left(\Kyx_{\bm{\pi}}\right)_{ij}{\defeq}(\SinjectOUT_{\bm{\pi}}\SembedIN)_{ij}=\scalarpH{\target(\bxj),\phi(\bxi)}$ so \eqref{eq:kernelTarget} is obtained after applying the \textit{derivative reproducing property} \citep[Theorem 1]{ZHOU2008456}.
\end{proof}
With the control-affinity of the dynamics inherited by \eqref{eq:kernelTarget} for $\Kyx_{\bm{\pi}}$, we obtain matrices
    \begin{align}\label{eq:FinDim}
    \SFA & {\defeq} \bm{K}^{-1}_\gamma \Kyx_{\bm{0}_{~}}{ \in} \R^{N\times N}
   \qquad \text{and}\qquad \SFB {=} [\SFB_1 \cdots \SFB_{n_u}], \SFB_i {\defeq} \bm{K}^{-1}_\gamma(\Kyx_{\bm{e}_i}{-}\Kyx_{\bm{0}})\in \R^{N\times N},
    \end{align}
    that are fully described using data from Assumption \ref{asm:data} after setting $\bm{\pi}$ to $\{\bm{u}_j\}^{n_u}_{j=0}$ in \eqref{eq:kernelTarget}. With the help of the above lemma and control system matrices \eqref{eq:FinDim}, we can state the following result.
\begin{proposition}
\label{prop::HJB}
Let the estimates for \eqref{eq:HJB} be $\hat{A}\defeq \SembedIN\bm{K}^{-1}_\gamma\SinjectOUT_{{0}}$, $\{\hat{B}_i\defeq  \SembedIN\bm{K}^{-1}_\gamma (\SinjectOUT_{\bm{e}_m} {-} \SinjectOUT_{{0}})\}_{m=1}^{n_u}$, $\hat{\dot{V}} {\defeq}  \SembedIN \dot{\bm{v}},\hat{V} {\defeq}  \SembedIN\bm{v}, \hat{q} {\defeq}  \SembedIN\bm{q}, \hat{D}_r(\hat{B}\hat{V}) {=} \SembedIN \bm{K}^{-1}_\gamma \SinjectIN \mathcal{D}_r(\hat{B}\hat{V}) \in \RKHS$, where $\RKHS$ fulfills the conditions of Lemma \ref{lem::KernelMatrix}. Then, solving the infinite-dimensional
$$
\scalarp{- \dot{\hat{V}}= \hat{A}  \hat{V}+\hat{q} + \hat{D}_r(\hat{B}\hat{V}), k_{\bm{x}}} \qquad \mathrm{s.t.} \qquad \scalarp{\hat{V}_T{=}{0},k_{\bm{x}}} \quad  ~\text{on}~~(0,T) {\times} \mathbb{X}.
$$ amounts to $N$-dimensional final-value problem (FVP)
\begin{align}\label{eq:finiteHJB}\tag{$\mathsf{\widehat{\txt{\textsc{hjb-fvp}}}}$}
\scalarp{-\dot{\bm{v}}= \SFA  \bm{v}+\bm{q} + \bm{D}_r(\SFB \bm{v}), \bm{k}(\bx)} \quad \mathrm{s.t.} \quad \scalarp{\bm{v}_T{=}\bm{0},\bm{k}(\bx)}\quad~~\text{on}~~(0,T) {\times} \mathbb{X}. 
\end{align}
where $\bm{k}(\bx)=[k(\bxi, \bx)]_{i \in [N]} \in \R^{N}$ and $\bm{q}=\bm{K}^{-1}_\gamma \bm{q}_\SFX$, $\bm{D}_r(\bm{B}\bm{v})=\bm{K}^{-1}_\gamma [\mathcal{D}_r(\scalarp{\bm{{B}}{\bm{v}},\bm{k}(\bm{x}^{(i)})})]_{i\in [N]}$.
\end{proposition}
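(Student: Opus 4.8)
The plan is to substitute the closed-form finite-rank representation $\hat{V}(t)=\SembedIN\bm{v}(t)$ into the projected final-value problem and to show that every term reduces to $\SembedIN$ acting on an explicit vector in $\R^N$, so that the identity in $\RKHS$ collapses to its $\R^N$-coordinate version~\eqref{eq:finiteHJB}. The two structural facts I would lean on are the Gram identity $\SinjectIN\SembedIN=\bm{K}$ and the target-kernel identity $\SinjectOUT_{\bm{\pi}}\SembedIN=\Kyx_{\bm{\pi}}$ supplied by Lemma~\ref{lem::KernelMatrix}; these are exactly what turn an operator composition routed through $\RKHS$ into a product of $N\times N$ matrices.

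For the linear terms this is direct bookkeeping. Using $\hat{V}=\SembedIN\bm{v}$ together with $\SinjectOUT_{\bm{0}}\SembedIN=\Kyx_{\bm{0}}$ gives $\hat{A}\hat{V}=\SembedIN\bm{K}^{-1}_\gamma\SinjectOUT_{\bm{0}}\SembedIN\bm{v}=\SembedIN\bm{K}^{-1}_\gamma\Kyx_{\bm{0}}\bm{v}=\SembedIN(\SFA\bm{v})$, and likewise $\hat{B}_i\hat{V}=\SembedIN\bm{K}^{-1}_\gamma(\Kyx_{\bm{e}_i}-\Kyx_{\bm{0}})\bm{v}=\SembedIN(\SFB_i\bm{v})$, so componentwise $\hat{B}\hat{V}=\SembedIN(\SFB\bm{v})$. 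The stage-cost estimate is the kernel-ridge interpolant $\hat{q}=\SembedIN\bm{K}^{-1}_\gamma\bm{q}_\SFX=\SembedIN\bm{q}$, and since $\SembedIN$ is time-independent the derivative passes through, $\hat{\dot{V}}=\SembedIN\dot{\bm{v}}$. Thus each linear contribution is $\SembedIN$ applied to the matching coordinate expression of~\eqref{eq:finiteHJB}.

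The step needing care is the nonlinear dual term $\hat{D}_r(\hat{B}\hat{V})$, since $\mathcal{D}_r$ acts pointwise and cannot be commuted through the operators as the linear maps were. I would first evaluate its argument through the reproducing property: for every $\bx$, $(\hat{B}_i\hat{V})(\bx)=\scalarpH{\SembedIN(\SFB_i\bm{v}),\phi(\bx)}=\scalarp{\SFB_i\bm{v},\bm{k}(\bx)}$, hence $(\hat{B}\hat{V})(\bx)=\scalarp{\SFB\bm{v},\bm{k}(\bx)}$. By the definition $\hat{D}_r(\hat{B}\hat{V})=\SembedIN\bm{K}^{-1}_\gamma\SinjectIN\mathcal{D}_r(\hat{B}\hat{V})$, the sampled vector is $\SinjectIN\mathcal{D}_r(\hat{B}\hat{V})=[\mathcal{D}_r((\hat{B}\hat{V})(\bm{x}^{(i)}))]_{i\in[N]}=[\mathcal{D}_r(\scalarp{\SFB\bm{v},\bm{k}(\bm{x}^{(i)})})]_{i\in[N]}$, so $\hat{D}_r(\hat{B}\hat{V})=\SembedIN\bm{D}_r(\SFB\bm{v})$ with $\bm{D}_r$ exactly as stated. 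The key point is that once $\hat{V}$ lies in the data span, the pointwise nonlinearity is fully determined by the finitely many values $\scalarp{\SFB\bm{v},\bm{k}(\bm{x}^{(i)})}$, which is what the sample-then-reinterpolate construction of $\hat{D}_r$ encodes; conflating ``apply $\mathcal{D}_r$, then interpolate'' with ``interpolate, then apply $\mathcal{D}_r$'' is the one place the argument could break.

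Collecting terms, the projected equation reads $-\SembedIN\dot{\bm{v}}=\SembedIN(\SFA\bm{v}+\bm{q}+\bm{D}_r(\SFB\bm{v}))$, which also shows the right-hand side stays in the data span, so the reduced problem is closed. Testing against $k_{\bm{x}}=\phi(\bx)$ and using $\scalarpH{\SembedIN\bm{w},\phi(\bx)}=\scalarp{\bm{w},\bm{k}(\bx)}$ collapses the identity to the tested $\R^N$ form of~\eqref{eq:finiteHJB}, and the terminal condition $\hat{V}_T=0$ reduces identically to $\scalarp{\bm{v}_T,\bm{k}(\bx)}=0$. If instead one wants the strong coordinate ODE, invertibility of $\bm{K}$ makes $\SembedIN$ injective on coefficient vectors, giving equality of the $\R^N$ vectors directly. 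Apart from the nonlinear term, the entire proof is the bookkeeping of factoring $\SembedIN$ out on the left.
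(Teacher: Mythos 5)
Your proposal is correct and follows essentially the same route as the paper's proof: plug the finite-rank estimates into the projected equation, use $\SinjectIN\SembedIN=\bm{K}$ and $\SinjectOUT_{\bm{\pi}}\SembedIN=\Kyx_{\bm{\pi}}$ to collapse the operator compositions to $N\times N$ matrix products, and handle the dual term via $[\hat{B}\hat{V}](\bx)=\scalarp{\bm{B}\bm{v},\bm{k}(\bx)}$ together with the sample-then-reinterpolate definition of $\hat{D}_r$. Your write-up is in fact more explicit than the paper's about the nonlinear term and about why the right-hand side stays in the data span, but the underlying argument is identical.
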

    \begin{proof}
The estimated \eqref{eq:HJB} takes the following form in $\RKHS$
$$
\scalarp{- \dot{\hat{V}}= \hat{A}  \hat{V}+\hat{q} + \hat{D}_r(\hat{B}\hat{V}), k_{\bm{x}}} \qquad \mathrm{s.t.} \qquad \scalarp{\hat{V}_T{=}{0},k_{\bm{x}}} ~~\text{on}~~(0,T) {\times} \mathbb{X}.
$$
After plugging the defined estimates, we obtain
\begin{subequations}
    \begin{align} 
    \langle-\SembedIN\dot{\bm{v}}&{=}\SembedIN \bm{K}^{-1}_\gamma  \SinjectOUT_{{0}} \SembedIN\bm{v}+\SembedIN \bm{q}+ \SembedIN \bm{K}^{-1}_\gamma \SinjectIN \mathcal{D}_r(\hat{B}\hat{V}), k_{\bx}\rangle ~~~ \mathrm{s.t.} ~~~ \scalarp{\bm{v}_T{=}\bm{0},\bm{k}(\bx)}~~\text{on}~~(0,T) {\times} \mathbb{X},\label{eq:PreTrick}\\
    \langle-\dot{\bm{v}}&{=} \SFA  \bm{v}+\bm{q} + \bm{D}_r(\SFB \bm{v}), \bm{k}(\bx)\rangle ~~~ \mathrm{s.t.} ~~~ \scalarp{\bm{v}_T{=}\bm{0},\bm{k}(\bx)}~~\text{on}~~(0,T) {\times} \mathbb{X},\label{eq:fINALtRICK} 
    \end{align}
\end{subequations}
    where \eqref{eq:fINALtRICK} is obtained by using the definitions \eqref{eq:Sampl}-\eqref{eq:Eembed}, the derivative reproducing property of Lemma \ref{lem::KernelMatrix} and the reproducing property in \eqref{eq:PreTrick} for $\SinjectIN k_{\bx} = \bm{k}(\bx) $ and
    $[\hat{B}\hat{V}](\bx)=\scalarp{\bm{B}\bm{v},\bm{k}(\bx)}$.
    \end{proof} 
\begin{algorithm}[ht!]
\small
    \caption{Infinitesimal Generator Kernel HJB Equation $(\texttt{IG-KHJB})$}
\label{alg:IG-kHJB}
\begin{algorithmic}[0]
\Require Data $\Set{D}^N$ \eqref{eq:data} \& samples for the state cost $\bm{q}_\SFX{=}[q(\bxi)]^N_{i=1}$, control penalty dual, diffusion $\epsilon {>} 0$, kernel $k$, regularizer $\gamma {>} 0$.
\State Compute $\bm{K} {\defeq} [k(\bxi,\bxj)]_{i,j\in[N]}$, $\bm{K}_\gamma {=} (\bm{K}{ + }N\gamma I_N)$ and define $(\mathcal{D}_r(\hat{B}\hat{V}))_{\SFX} {\defeq} [\mathcal{D}_r(\scalarp{{\bm{B}} \bm{v}{,}\bm{k}(\bx^{(i)})})]^N_{i=1}$
\State Compute $\SFA$ and $\SFB$ using \eqref{eq:FinDim} and $\bm{q} = \bm{K}_\gamma ^{-1}\bm{q}_\SFX$, $\bm{D}_r(\bm{B}\bm{v})\defeq\bm{K}_\gamma ^{-1}(\mathcal{D}_r(\hat{B}\hat{V}))_{\SFX}$
\Function{\textsc{hjb-fvp}}{$T,\bm{q},\SFA, \SFB, \bm{D}_r(\cdot)$}
\State Initialize $\bm{v}_T{=}\bm{0}$
\State Integrate  $-\dot{\bm{v}} =  \SFA  \bm{v}+\bm{q} + \bm{D}_r(\SFB \bm{v})$ from $T$ to $0$\Comment{e.g., using implicit Euler}
\State \Return $\bm{v}_0$
\EndFunction
\State $ \bm{v}_0$ = \textsc{hjb-fvp}($T,\bm{q},\SFA, \SFB, \bm{D}_r(\cdot)$)
\State Compute $\widehat{V}^\star_0(\bx)  
=  \scalarp{\bm{v}_0,\bm{k}(\bx)} \quad \text{and} \quad
\widehat{\bm{\pi}}^\star(\bx)  
\defeq \bm{u}^\star(\scalarp{{\bm{B}} \bm{v}_0,\bm{k}(\bx)})$
\end{algorithmic}
\end{algorithm}
\looseness=-1
\vspace{-0.5em}\section{Numerical Experiments}\label{sec:NumericalResults}
In this section, we present numerical examples to evaluate the performance of our \texttt{IG-KHJB} approach using Algorithm~\ref{alg:IG-kHJB}. We compare our approach to \cite{bevanda2024data} for optimal control of an unstable oscillator and to state-of-the-art NMPC for a swing-up and stabilization task on the inverted pendulum and cartpole systems. The latter employ $\texttt{Dojo}$ \citep{howelllecleach2022dojo} for dynamics simulation and $\texttt{Altro}$ \citep{howell2019altro} in a receding-horizon fashion (NMPC)\footnote{\texttt{ALTRO}'s parameters including initial guesses and OCP discretization were hand-tuned to best performance.}.
\\\textbf{Implementation details}
In the unstable oscillator example (\ref{subsec:unstable_oscillator}), we use the ODE of the system directly as well as analytical formulations for the kernel partial derivatives. In the examples using \texttt{Dojo}, the infinitesimal measurements for \eqref{eq:data} are not directly accessible and is therefore approximated via finite differences (FD). Additionally, to speed up computations, partial derivatives are also approximated by FD. 
We use an Euler-Implicit scheme to integrate \eqref{eq:finiteHJB} with a time step size $\Delta t$, with 
 parameter values for each experiment described in Table~\ref{tab:params}. The controllers for inverted pendulum and cartpole were smoothed using $\textstyle\frac{2\max|u|}{\pi}\arctan(\widehat{\pi}^\star(\bx))$ according to the input constraint from Table~\ref{tab:params}.
\vspace{-0.5em}
\subsection{Unstable Oscillator}\label{subsec:unstable_oscillator}
We compare our \texttt{IG-KHJB} to \cite{bevanda2024data} on the 2D Van der Pol Oscillator using identical dynamics and cost functions. The optimal infinite-horizon policy for $\epsilon \to 0^+$ can be analytically computed as $\boldsymbol{\pi}_{\infty}^\star(\bm{x}) = -x_1 x_2$ \citep{Villanueva2021}. Both methods are compared using the root mean square error (RMSE) against this optimal policy, evaluated on $1000$ uniformly sampled test points. Both methods employ an RBF kernel\footnote{The discrete-time kHJB requires a diffused RBF kernel; see \cite{bevanda2024data} for details.} $\mathrm{e}^{\textstyle -\nicefrac{\|\bm{x} -\bm{x}'\|^ 2}{\sigma^2}}$.
\begin{table}[t]
\centering
\scriptsize
\begin{tabular}{l|llllllll}
 \toprule
System  & Data grid & $\Set{X}_S$ & ${\max} |u|$ & $\sigma$ & $\epsilon$ & $\gamma$  & $\Delta t$ & $H$   \\
 \midrule
 Unstable Oscillator &  $ \sqrt{N} \times \sqrt{N}$  & $[\pm3, \pm3]$ & $-$ & $[5, 300]$ & $0.01$ & $10^{-8} $& $0.01$s & $1000$   \\
 Inverted Pendulum &  $50{\times}50$  &  $[\pm 0.99\pi, \pm 10]$ &$1.5$ Nm & $25$ & $0.02$ & $10^{-12} $& $0.02$s & ${500}$   \\
 Cartpole &  $ 9{\times}7{\times}23{\times}23$  &  $[\pm 2.5, \pm 3, \pm 0.99 \pi, \pm 8]$ & $7$ N &  $15$ & $0.01$ & $10^{-12}$& $0.01$s & $3000$ \\
\bottomrule
\end{tabular}
\vspace{-0.7em}
\caption{Parameter values used in Algorithm~\ref{alg:IG-kHJB} for the experiments in Sections \ref{subsec:InvPend} and \ref{subsec:Cartpole}. We use an implicit integration scheme to solve \texttt{IG-KHJB} over a time-horizon $T = H \Delta t $.}
\label{tab:params}
\end{table} 
In Figure~\ref{fig:tac_vs_us} (left), we evaluate the performance of \texttt{KHJB} \citep{bevanda2024data} against our proposed \texttt{IG-KHJB} for a varying lengthscale (Table \ref{tab:params}) using $N = 2500$ datapoints. In Figure~\ref{fig:tac_vs_us} (right), the optimal lengthscales are fixed to $\sigma_\texttt{KHJB} = 23, \sigma_\texttt{IG-KHJB} = 43$ and the RMSE is evaluated for both methods across $N \in \{5^2,\dots, 50^2\}$ data. The results show that our method is more robust to the kernel lengthscale choice and achieves up to three times lower RMSE with fewer data compared to \texttt{KHJB}. Notably, \texttt{IG-KHJB} achieves a lower RMSE than \texttt{KHJB} with only $N = 25$ datapoints.
\begin{figure}[t]
    \centering
    \begin{minipage}[t]{0.45\textwidth} 
        \centering
        \resizebox{\textwidth}{!}{
\usepgfplotslibrary{fillbetween}

\begin{tikzpicture}[/tikz/background rectangle/.style={fill={rgb,1:red,1.0;green,1.0;blue,1.0}, fill opacity={1.0}, draw opacity={1.0}}]
\begin{axis}[point meta max={nan}, point meta min={nan}, legend cell align={left}, legend columns={1}, title style={at={{(0.5,1)}}, anchor={south}, font={{\fontsize{14 pt}{18.2 pt}\selectfont}}, color={rgb,1:red,0.0;green,0.0;blue,0.0}, draw opacity={1.0}, rotate={0.0}, align={center}}, legend style={color={rgb,1:red,0.0;green,0.0;blue,0.0}, draw opacity={1.0}, line width={1}, solid, fill={rgb,1:red,1.0;green,1.0;blue,1.0}, fill opacity={1.0}, text opacity={1.0}, font={{\fontsize{14 pt}{18.2 pt}\selectfont}}, text={rgb,1:red,0.0;green,0.0;blue,0.0}, cells={anchor={center}}, at={(0.02, 0.02)}, anchor={south west}}, axis background/.style={fill={rgb,1:red,1.0;green,1.0;blue,1.0}, opacity={1.0}}, anchor={north west}, xshift={0.0mm}, yshift={-0.0mm}, width={120.0mm}, height={55.0mm}, scaled x ticks={false}, xlabel={lengthscale $\sigma$}, x tick style={color={rgb,1:red,0.0;green,0.0;blue,0.0}, opacity={1.0}}, x tick label style={color={rgb,1:red,0.0;green,0.0;blue,0.0}, opacity={1.0}, rotate={0}}, xlabel style={at={(ticklabel cs:0.5)}, anchor=near ticklabel, at={{(ticklabel cs:0.5)}}, anchor={near ticklabel}, font={{\fontsize{14 pt}{18.2 pt}\selectfont}}, color={rgb,1:red,0.0;green,0.0;blue,0.0}, draw opacity={1.0}, rotate={0.0}}, xmode={log}, log basis x={10}, xmajorgrids={true}, xmin={4.422068511161236}, xmax={339.20777034865506}, xticklabels={{$10^{1}$,$10^{2}$}}, xtick={{10.0,100.0}}, xtick align={inside}, xticklabel style={font={{\fontsize{14 pt}{18.2 pt}\selectfont}}, color={rgb,1:red,0.0;green,0.0;blue,0.0}, draw opacity={1.0}, rotate={0.0}}, x grid style={color={rgb,1:red,0.0;green,0.0;blue,0.0}, draw opacity={0.1}, line width={0.5}, solid}, axis x line*={left}, x axis line style={color={rgb,1:red,0.0;green,0.0;blue,0.0}, draw opacity={1.0}, line width={1}, solid}, scaled y ticks={false}, ylabel={RMSE to $\boldsymbol{\pi}_{\infty}^\star(\bm{x})$}, y tick style={color={rgb,1:red,0.0;green,0.0;blue,0.0}, opacity={1.0}}, y tick label style={color={rgb,1:red,0.0;green,0.0;blue,0.0}, opacity={1.0}, rotate={0}}, ylabel style={at={(ticklabel cs:0.5)}, anchor=near ticklabel, at={{(ticklabel cs:0.5)}}, anchor={near ticklabel}, font={{\fontsize{14 pt}{18.2 pt}\selectfont}}, color={rgb,1:red,0.0;green,0.0;blue,0.0}, draw opacity={1.0}, rotate={0.0}}, ymode={log}, log basis y={10}, ymajorgrids={true}, ymin={0.012652728631728559}, ymax={2.636413934340885}, yticklabels={{$10^{-1}$,$10^{0}$}}, ytick={{0.1,1.0}}, ytick align={inside}, yticklabel style={font={{\fontsize{14 pt}{18.2 pt}\selectfont}}, color={rgb,1:red,0.0;green,0.0;blue,0.0}, draw opacity={1.0}, rotate={0.0}}, y grid style={color={rgb,1:red,0.0;green,0.0;blue,0.0}, draw opacity={0.1}, line width={0.5}, solid}, axis y line*={left}, y axis line style={color={rgb,1:red,0.0;green,0.0;blue,0.0}, draw opacity={1.0}, line width={1}, solid}, colorbar={false}]
    \addplot[color={rgb,1:red,1.0;green,0.0;blue,0.0}, name path={13}, draw opacity={1.0}, line width={2}, solid, mark={*}, mark size={3.00 pt}, mark repeat={1}, mark options={color={rgb,1:red,1.0;green,0.0;blue,0.0}, draw opacity={1.0}, fill={rgb,1:red,1.0;green,0.0;blue,0.0}, fill opacity={1.0}, line width={0.75}, rotate={0}, solid}]
        table[row sep={\\}]
        {
            \\
            5.0  0.8914830371894003  \\
            6.0  0.8343934961721745  \\
            8.0  0.8685916574897006  \\
            10.0  0.8982823699932485  \\
            12.0  0.12872798359405982  \\
            15.0  0.11978526388088791  \\
            18.0  0.06694130113587994  \\
            23.0  0.05555018906551025  \\
            28.0  0.05768411360375936  \\
            35.0  0.0746996496733067  \\
            43.0  0.13579850306857047  \\
            54.0  0.3230492156358038  \\
            66.0  0.6871678420655413  \\
            82.0  1.3853200280065616  \\
            102.0  1.9125703528174316  \\
            127.0  2.1393857597214643  \\
            157.0  2.236731979827714  \\
            195.0  2.261814893181777  \\
            242.0  2.26610458598965  \\
            300.0  2.2666598455973825  \\
        }
        ;
    \addlegendentry {KHJB}
    \addplot[color={rgb,1:red,0.0;green,0.0;blue,1.0}, name path={14}, draw opacity={1.0}, line width={2}, solid, mark={*}, mark size={3.00 pt}, mark repeat={1}, mark options={color={rgb,1:red,0.0;green,0.0;blue,1.0}, draw opacity={1.0}, fill={rgb,1:red,0.0;green,0.0;blue,1.0}, fill opacity={1.0}, line width={0.75}, rotate={0}, solid}]
        table[row sep={\\}]
        {
            \\
            5.0  0.8705458960143422  \\
            6.0  0.8681049429640962  \\
            8.0  0.9122850225405008  \\
            10.0  0.8573027648451294  \\
            12.0  0.8528364194386885  \\
            15.0  0.8537443527202663  \\
            18.0  0.8031956436976372  \\
            23.0  0.29418628848469397  \\
            28.0  0.12420997677874408  \\
            35.0  0.028543248043345903  \\
            43.0  0.014716734024699477  \\
            54.0  0.016889143468640243  \\
            66.0  0.018929504207262825  \\
            82.0  0.020286268113793447  \\
            102.0  0.0206955013994289  \\
            127.0  0.020187544803690095  \\
            157.0  0.020636141230991473  \\
            195.0  0.03848949861187057  \\
            242.0  0.1531151725547176  \\
            300.0  0.6588486359969794  \\
        }
        ;
    \addlegendentry {IG-KHJB (ours)}
\end{axis}
\end{tikzpicture}}
        \label{fig:first_plot}
    \end{minipage}
   \hspace{0.05\linewidth} 
    \begin{minipage}[t]{0.45\textwidth} 
        \centering
        \resizebox{\textwidth}{!}{

\begin{tikzpicture}[/tikz/background rectangle/.style={fill={rgb,1:red,1.0;green,1.0;blue,1.0}, fill opacity={1.0}, draw opacity={1.0}}]
\begin{axis}[point meta max={nan}, point meta min={nan}, legend cell align={left}, legend columns={1}, title style={at={{(0.5,1)}}, anchor={south}, font={{\fontsize{14 pt}{18.2 pt}\selectfont}}, color={rgb,1:red,0.0;green,0.0;blue,0.0}, draw opacity={1.0}, rotate={0.0}, align={center}}, legend style={color={rgb,1:red,0.0;green,0.0;blue,0.0}, draw opacity={1.0}, line width={1}, solid, fill={rgb,1:red,1.0;green,1.0;blue,1.0}, fill opacity={1.0}, text opacity={1.0}, font={{\fontsize{14 pt}{18.2 pt}\selectfont}}, text={rgb,1:red,0.0;green,0.0;blue,0.0}, cells={anchor={center}}, at={(0.02, 0.78)}, anchor={north west}}, axis background/.style={fill={rgb,1:red,1.0;green,1.0;blue,1.0}, opacity={1.0}}, anchor={north west}, xshift={0.0mm}, yshift={-0.0mm}, width={120.0mm}, height={55.0mm}, scaled x ticks={false}, xlabel={dataset size N}, x tick style={color={rgb,1:red,0.0;green,0.0;blue,0.0}, opacity={1.0}}, x tick label style={color={rgb,1:red,0.0;green,0.0;blue,0.0}, opacity={1.0}, rotate={0}}, xlabel style={at={(ticklabel cs:0.5)}, anchor=near ticklabel, at={{(ticklabel cs:0.5)}}, anchor={near ticklabel}, font={{\fontsize{14 pt}{18.2 pt}\selectfont}}, color={rgb,1:red,0.0;green,0.0;blue,0.0}, draw opacity={1.0}, rotate={0.0}}, xmode={log}, log basis x={10}, xmajorgrids={true}, xmin={21.774089748902018}, xmax={2870.3840537422034}, xticklabels={{$10^{2}$,$10^{3}$}}, xtick={{100.0,1000.0}}, xtick align={inside}, xticklabel style={font={{\fontsize{14 pt}{18.2 pt}\selectfont}}, color={rgb,1:red,0.0;green,0.0;blue,0.0}, draw opacity={1.0}, rotate={0.0}}, x grid style={color={rgb,1:red,0.0;green,0.0;blue,0.0}, draw opacity={0.1}, line width={0.5}, solid}, axis x line*={left}, x axis line style={color={rgb,1:red,0.0;green,0.0;blue,0.0}, draw opacity={1.0}, line width={1}, solid}, scaled y ticks={false}, y tick style={color={rgb,1:red,0.0;green,0.0;blue,0.0}, opacity={1.0}}, y tick label style={color={rgb,1:red,0.0;green,0.0;blue,0.0}, opacity={1.0}, rotate={0}}, ylabel style={at={(ticklabel cs:0.5)}, anchor=near ticklabel, at={{(ticklabel cs:0.5)}}, anchor={near ticklabel}, font={{\fontsize{14 pt}{18.2 pt}\selectfont}}, color={rgb,1:red,0.0;green,0.0;blue,0.0}, draw opacity={1.0}, rotate={0.0}}, ymode={log}, log basis y={10}, ymajorgrids={true}, ymin={0.011731559498159195}, ymax={2.9203362085329556}, yticklabels={{$10^{-1}$,$10^{0}$}}, ytick={{0.1,1.0}}, ytick align={inside}, yticklabel style={font={{\fontsize{14 pt}{18.2 pt}\selectfont}}, color={rgb,1:red,0.0;green,0.0;blue,0.0}, draw opacity={1.0}, rotate={0.0}}, y grid style={color={rgb,1:red,0.0;green,0.0;blue,0.0}, draw opacity={0.1}, line width={0.5}, solid}, axis y line*={left}, y axis line style={color={rgb,1:red,0.0;green,0.0;blue,0.0}, draw opacity={1.0}, line width={1}, solid}, colorbar={false}]
    \addplot[color={rgb,1:red,1.0;green,0.0;blue,0.0}, name path={21}, draw opacity={1.0}, line width={2}, solid, mark={*}, mark size={3.00 pt}, mark repeat={1}, mark options={color={rgb,1:red,1.0;green,0.0;blue,0.0}, draw opacity={1.0}, fill={rgb,1:red,1.0;green,0.0;blue,0.0}, fill opacity={1.0}, line width={0.75}, rotate={0}, solid}]
        table[row sep={\\}]
        {
            \\
            25.0  2.4981548227630004  \\
            36.0  2.141666845115811  \\
            49.0  2.133981173104006  \\
            64.0  2.143948718878298  \\
            100.0  2.140182497860756  \\
            121.0  2.1405738059843094  \\
            169.0  2.1389676490315375  \\
            256.0  2.140230227679776  \\
            361.0  0.3668125110824574  \\
            484.0  2.1392501372595496  \\
            676.0  2.1384752322500513  \\
            961.0  0.05063332733674794  \\
            1296.0  0.11527462678334903  \\
            1764.0  0.0697346731534538  \\
            2500.0  0.05681645573152186  \\
        }
        ;
    \addlegendentry {KHJB}
    \addplot[color={rgb,1:red,0.0;green,0.0;blue,1.0}, name path={22}, draw opacity={1.0}, line width={2}, solid, mark={*}, mark size={3.00 pt}, mark repeat={1}, mark options={color={rgb,1:red,0.0;green,0.0;blue,1.0}, draw opacity={1.0}, fill={rgb,1:red,0.0;green,0.0;blue,1.0}, fill opacity={1.0}, line width={0.75}, rotate={0}, solid}]
        table[row sep={\\}]
        {
            \\
            25.0  0.029958372720903622  \\
            36.0  0.02574742006404572  \\
            49.0  0.022910017936893345  \\
            64.0  0.02095149633432863  \\
            100.0  0.018528018588036464  \\
            121.0  0.0177376663466407  \\
            169.0  0.01663907549080507  \\
            256.0  0.015652600138204926  \\
            361.0  0.015066846579782371  \\
            484.0  0.01469003492717706  \\
            676.0  0.014365581428486741  \\
            961.0  0.014111933161482938  \\
            1296.0  0.013953684918873022  \\
            1764.0  0.013825947768375674  \\
            2500.0  0.013714161217254252  \\
        }
        ;
    \addlegendentry {IG-KHJB (ours)}
\end{axis}
\end{tikzpicture}}
        \label{fig:second_plot}
    \end{minipage}
    \vspace{-20pt}
    \caption{Comparison of RMSE to the known optimal policy $\boldsymbol{\pi}_{\infty}^\star(\bm{x})$ between \texttt{KHJB} ( \cite{bevanda2024data}) and our \texttt{IG-KHJB} approach {for the Van der Pol Oscillator}.}
    \label{fig:tac_vs_us}
\end{figure}
\vspace{-0.5em}
\subsection{Torque Limited Inverted Pendulum} 
\label{subsec:InvPend}
\begin{wrapfigure}{R}{0.6\textwidth}
    \centering
    \includegraphics[trim={0cm 0.1cm 0cm 0.1cm},clip, width=0.99\linewidth]{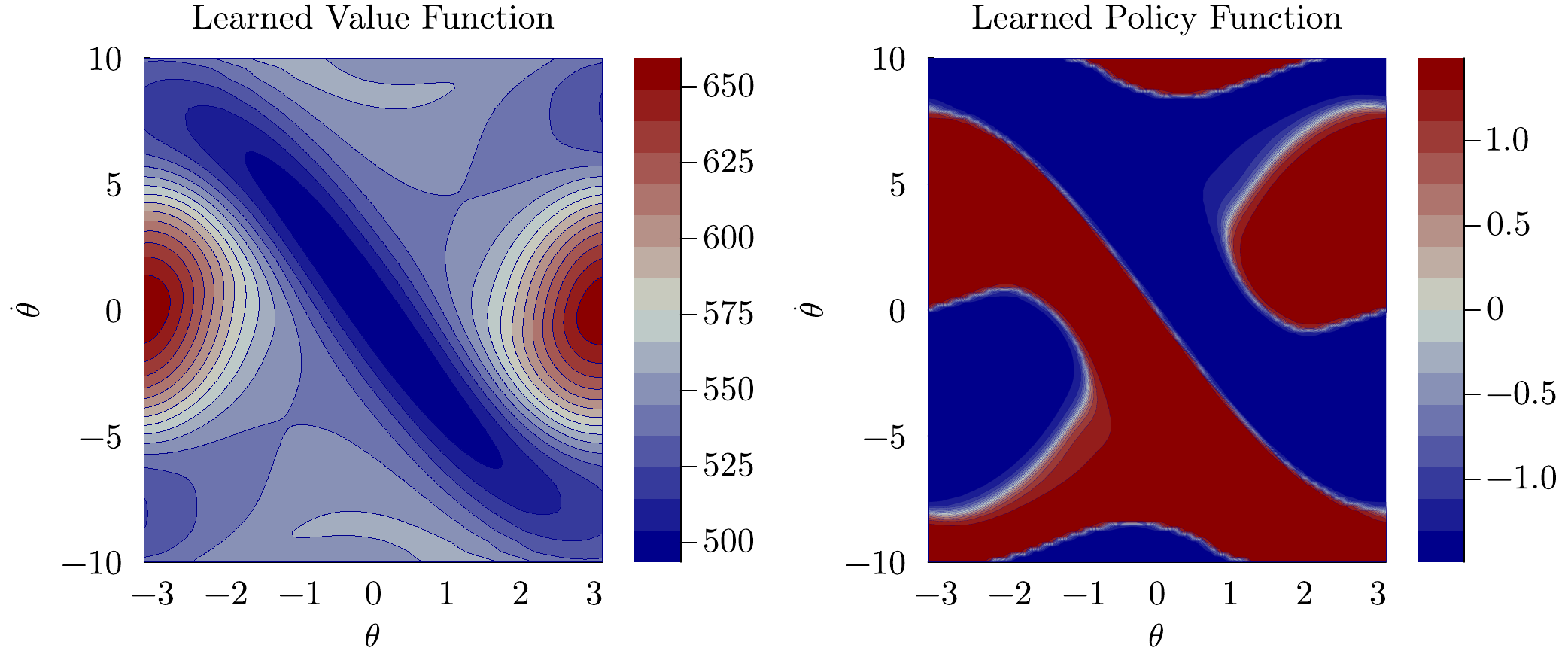}
    \vspace{-0.5em}
    \caption{Contour plots of the value and controller functions learned from $2.5\cdot10^3$ samples using our \texttt{IG-KHJB} approach (upright at $(\dot{\theta},\theta){=}(0,0)$).}
    \label{fig:pendulum_V_pi}
\end{wrapfigure}
\looseness = -1
We evaluate our method on a swing-up and stabilization task at the upright equilibrium $\theta=0$ for the inverted pendulum. The value function and policy shown in Fig.~\ref{fig:pendulum_V_pi} are learned using Algorithm \ref{alg:IG-kHJB} with data \eqref{eq:data} generated from $\texttt{Dojo}$ \citep{howelllecleach2022dojo}. We use a Laplace kernel $\mathrm{e}^{\textstyle -\nicefrac{\|\bm{x} -\bm{x}'\|}{\sigma}}$, that is additionally smoothed to allow for stable derivative computations\footnote{We provide the derivative at $\bm{0}$ using those of an RBF kernel with $\nicefrac{\sigma}{100}$.}. The employed parameters can be found in Table~\ref{tab:params}.
To deal with wrap-around of $\theta$ we define the state $\bm{x} \defeq [c_{\theta}, s_{\theta}, \dot{\theta}]^\top$ using shorthands $s_{\theta}\defeq \sin \theta$ and $c_{\theta}\defeq \cos \theta$. The state cost is given by $q(\bm{x}) =  q_1 s_{\theta}^2 + q_2 (c_{\theta} - 1)^2 + q_v \dot \theta ^2 $ with $q_1, q_2 = 30, q_v = 1$ and control penalization $r(u) = \textstyle \|u\|^2_{\nicefrac{1}{2}}$, clipped at the control limits. 
We compare our policy with regard to the accumulated trajectory costs to state-of-the-art NMPC using $\texttt{Altro}$ \citep{howell2019altro} for trajectory optimization. We use the same pendulum dynamics for $\texttt{Altro}$ as in \texttt{Dojo}\footnote{With $m=1.0 \mathrm{kg}$, $g=9.81\mathrm{m/s}^2$, {$l=1.0 \mathrm{m}$, } $b=0.1 \mathrm{kg.m}^2\mathrm{s}^{-1}$, $I = 0.0842 \mathrm{kg.m}^2$ with $m, g, l, b$ being respectively the mass, the gravitational constant and length of the pendulum, viscous damping and inertia around its mass center.}.
Both our policy, $\widehat{\bm{\pi}}^\star(\bx)$, and $\texttt{Altro-NMPC}$ are deployed with a control frequency of $50 \mathrm{Hz}$. Figure~\ref{fig:costs_invPend} shows that our method achieves lower mean accumulated costs than $\texttt{Altro-NMPC}$ under identical stage costs. Each lasting 5 seconds, the trajectories were simulated with initial states sampled uniformly from $[\pm \pi, \pm 8]$. The mean accumulated costs are averaged over $10$ runs, with $50$ different initial positions each.
\begin{figure}[t]
    \centering
    \caption{Accumulated stage costs using our learned policy $\widehat{\bm{\pi}}^\star(\bx)$ and $\texttt{Altro-NMPC}$.}
    \subfigure[Inverted Pendulum]{ 
        \centering
        \resizebox {.48\textwidth} {!} {\input{figures/pendulum/25-04-20_Accumulated_costs_plot_pgfplots_weightspace}}
        \label{fig:costs_invPend}
        }
    \hfill 
    \subfigure[Cartpole]{ 
        \centering
        \resizebox {.48\textwidth} {!} {\input{figures/cartpole/Accumulated_costs_cartpole_pgfplots}}
        \label{fig:costs_Cartpole}
        }
    \vspace{-0.5cm}
    \label{fig:costs}
\end{figure}
\vspace{-0.5em}
\subsection{Cartpole: Inverted Pendulum on a Cart}
\label{subsec:Cartpole}
We now consider the swing-up and stabilization task for a Cartpole system and compare it to $\texttt{Altro-NMPC}$. We again use the smoothed Laplace kernel, with the parameters in Table~\ref{tab:params} and data from \texttt{Dojo}.
Here, we also augment the state space representation to $\bm{x} = [x, \dot x, c_{\theta}, s_{\theta}, \dot \theta] ^\top$. To describe the task via stage costs, we penalize the Euclidean distance between the pendulum’s endpoint $(x_p,y_p)$ and the goal $(0,l)$ \citep{mcallister2017data}, defined as $d(x, c_\theta, s_\theta)\defeq q_hx^2_p+ q_v(y_p-l)^2=q_h(x - l s_{\theta})^2 +  q_v(c_{\theta}-1)^2$, with a velocity penalty, this amounts to the stage cost $q(\bm{x}) =  d(x, c_\theta, s_\theta) + q_{vel} \dot x ^2 + q_\omega \dot \theta ^2$ and control penalty $r(u) = \textstyle \|u\|^2_{\nicefrac{1}{5}}$, where $q_h = 10$, $q_v = 100l^2$, $q_{vel},q_\omega = 1$. 
For the cost comparison, trajectories with a duration of $10 \mathrm{s}$ are simulated with initial states sampled random uniformly on $[0, \pm 2, \pm \pi, \pm 6]$. Here, our controller's accumulated costs (see Figure \ref{fig:costs_Cartpole}) are lower than those of \texttt{Altro-NMPC}\footnote{The same dynamics as in \texttt{Dojo} are used, i.e. $M = 0.5 \mathrm{kg}$, $m = 0.5 \mathrm{kg}$, $l = 1.0 \mathrm{m}$, $b=0.05\mathrm{kg.m}^2\mathrm{s}^{-1}$, $k = 0.05 \mathrm{kg.s}^{-1}$, $I = 0.0513 \mathrm{kg.m}^2$, being respectively the cart mass, pendulum mass, pendulum length, viscous rotational damping, viscous damping of the cart, and the pendulum inertia around the center of mass.} by a large margin, computed over 10 runs with 50 different initial states per run and deployed at $200 \mathrm{Hz}$. To eliminate confounding effects from model‐mismatch, we ran \texttt{Altro} on an analytic dynamics model that we built to replicate the assumptions and parameterization used by \texttt{Dojo} in our experiments.
\vspace{-0.5em}
\section{Conclusion}
This article has introduced novel methods for data-driven nonlinear system identification and globally optimal stochastic control. In detail, we have derived non-parametric estimators of infinitesimal generators of optimally controlled diffusions, as summarized in Lemma~\ref{lem::KernelMatrix}. Moreover, in Proposition~\ref{prop::HJB} we have formulated a continuous-time Kernel Hamilton-Jacobi-Bellman (KHJB) equation, which enables the computation of data-driven approximations of globally optimal solutions to stochastic optimal control problems. Our method, outlined in Algorithm \ref{alg:IG-kHJB}, has been demonstrated to outperform modern data-driven and classical nonlinear programming methods for optimal control in both synthetic and robotics benchmarks.

\newpage
\acks{We thank Robert Lefringhausen and Max Beier for their valuable feedback while preparing this manuscript. This work was supported by the European Union’s Horizon Europe
innovation action programme under grant agreement No. 101093822,
“SeaClear2.0”, the DAAD programme Konrad Zuse Schools of Excellence in Artificial Intelligence, sponsored by the Federal Ministry of Education and Research.}

\bibliography{references}

\end{document}